
\documentclass[12pt]{article}

\usepackage{graphicx, amssymb, latexsym, amsfonts, amsmath, lscape, amscd,
amsthm, color, epsfig, mathrsfs, tikz, enumerate, xcolor}
\usepackage[normalem]{ulem} 

\setlength{\topmargin}{-1.5cm}
\setlength{\textheight}{23cm} 
\setlength{\textwidth}{16cm}    
\setlength{\oddsidemargin}{0cm} 
\setlength{\evensidemargin}{0cm} 

\vfuzz2pt 
\hfuzz2pt 
\newtheorem{theorem}{Theorem}




\newcommand\DELETE[1]{}

\newcommand\Z{\mathbb{Z}}
\newcommand\supp{\operatorname{supp}}


\begin{document}


\title{{\bf A short proof of Seymour's 6-flow theorem}}
\author{
   Matt DeVos\thanks{
     Email: {\tt mdevos@sfu.ca}. 
     Supported by an NSERC Discovery Grant (Canada)}
 \and
   Kathryn Nurse\thanks{
  Email: {\tt knurse@sfu.ca} 
}
}


\maketitle

\begin{abstract}
We give a compact variation of Seymour's proof that every $2$-edge-connected graph has a nowhere-zero $\Z_2 \times \Z_3$-flow.
\end{abstract}


All graphs are finite; loops and multiple edges are allowed. For notation not defined here we use~\cite{DiestelReinhard2017GTbR}. Let $G=(V,E)$ be a directed graph, $A$ an additively-written abelian group, and $f: E \to A$ a function. We say $f$ is an $A$-flow whenever $\sum_{e \in \delta^+(v)}f(e) = \sum_{e \in \delta^-(v)}f(e)$ holds for every $v \in V$, where $\delta^+(v)$ ($\delta^-(v)$) is the set of edges whose initial (terminal) vertex is $v$. If $0 \not\in f(E)$, then we say $f$ is \emph{nowhere-zero}. If $f$ is a $\Z$-flow with $f(E) \subseteq \{0, \pm1, \pm2, \dots, \pm(k-1)\}$, then we say $f$ is a \emph{$k$-flow}. Note that reversing an edge $e$ and replacing $f(e)$ with its negation preserves all of the aforementioned properties; accordingly the existence of a nowhere-zero $A$-flow or $k$-flow depends only on the underlying graph. A famous conjecture of Tutte \cite{Tutte} asserts that every $2$-edge-connected graph has a nowhere-zero $5$-flow.  This conjecture remains open with the best result due to Seymour \cite{SeymourP.D1981N6} who proved that such graphs have nowhere-zero $6$-flows.  His argument involves a standard reduction due to Tutte equating the existence of a nowhere-zero $k$-flow and a nowhere-zero $A$-flow whenever $|A|=k$, together with the following central result. 

\begin{theorem}[Seymour]
    Every $2$-edge-connected digraph has a nowhere-zero $\Z_2 \times \Z_3$-flow.
\end{theorem}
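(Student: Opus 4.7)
My plan is to realize the nowhere-zero $\Z_2 \times \Z_3$-flow as a pair $(f_2, f_3)$. I would pick a subgraph $H \subseteq G$ admitting a nowhere-zero $\Z_3$-flow $g$, extend $g$ by zero on $E(G) \setminus E(H)$ to a $\Z_3$-flow $f_3$ on all of $G$, and then construct a $\Z_2$-flow $f_2$ on $G$ whose support contains $E(G) \setminus E(H)$. Then on every edge either $f_3 \neq 0$ (if the edge lies in $E(H)$) or $f_2 \neq 0$ (otherwise), so the pair is nowhere zero.

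A $\Z_2$-flow on $G$ is precisely the indicator function of an even subgraph, so the existence of $f_2$ reduces to finding an even subgraph $E^\star \subseteq E(G)$ with $E^\star \supseteq E(G) \setminus E(H)$. Writing $E^\star = (E(G) \setminus E(H)) \cup J$ for some $J \subseteq E(H)$, the condition that $E^\star$ be even at each vertex is exactly that $J$ is a $T$-join in $H$, where $T = \{v \in V(G) : \deg_{G \setminus E(H)}(v) \text{ is odd}\}$. By the handshake lemma $|T|$ is even, so such a $T$-join exists whenever $H$ is both spanning and connected.

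The theorem therefore reduces to the following claim: every $2$-edge-connected graph $G$ contains a spanning connected subgraph admitting a nowhere-zero $\Z_3$-flow. I would approach this by induction on $|V(G)|$, first applying the standard reductions -- suppressing vertices of degree $2$, and splitting $G$ at each $2$-edge-cut and treating the two sides separately -- to reduce to the case where $G$ is $3$-edge-connected with minimum degree at least $3$. Starting from a cycle of $G$, I would then build $H$ incrementally by attaching ears, choosing the flow value on each new ear so as to preserve $\Z_3$-flowability at every step.

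The principal obstacle will be the attachment of odd-length ears: a single nonzero $\Z_3$-value on such an ear produces a conservation defect of the same magnitude at both endpoints, and this defect cannot always be canceled along a single path in the current subgraph without forcing some existing flow value to zero. I expect the short proof to dispose of this step either by attaching odd ears in pairs (whose defects cancel together) or by using $3$-edge-connectivity to reroute the flow through a chord or auxiliary cycle. Condensing this case into a compact argument is, I anticipate, the central technical contribution of the paper.
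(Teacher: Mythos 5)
Your overall architecture is sound as far as it goes: if $H$ is a spanning connected subgraph with a nowhere-zero $\Z_3$-flow $g$, then extending $g$ by zero gives a $\Z_3$-flow on $G$, the $T$-join argument correctly produces an even subgraph containing $E(G)\setminus E(H)$, and the resulting pair is nowhere zero. This is essentially Seymour's original decomposition (the one in Diestel's textbook), not the route taken in this paper, which runs a single contraction-based induction with the roles reversed: there $f_2$ is made nonzero on small even subgraphs (unions of two edge-disjoint paths) and $f_3$ on the stars of edges joining a distinguished vertex to them, and no spanning subgraph or $T$-join ever appears.

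The genuine gap is that your central claim --- every $3$-edge-connected graph contains a spanning connected subgraph admitting a nowhere-zero $\Z_3$-flow --- is precisely the hard content of Seymour's theorem, and the two strategies you sketch for proving it do not work. First, the obstacle is not about odd-length ears: assigning a constant nonzero value $c$ along an ear creates a defect of $\pm c$ at its two endpoints regardless of the ear's parity, so ``attaching odd ears in pairs'' addresses nothing. Second, the real difficulty is the one you half-identify --- cancelling the defect by pushing $c$ around a cycle through the already-built subgraph can annihilate previously assigned values --- and rerouting through a chord does not escape it. Seymour's resolution is to abandon general ears and grow $H$ as $H_0\subseteq H_1\subseteq\dots\subseteq H_n$ where each $F_i=E(H_i)\setminus E(H_{i-1})$ has \emph{at most two edges} and at most one vertex outside $H_{i-1}$; since $H_{i-1}$ is connected, there is an auxiliary $\Z_3$-flow $\varphi_i$ with $F_i\subseteq\supp(\varphi_i)\subseteq F_i\cup E(H_{i-1})$, and working from $i=n$ down to $i=1$ one adds a multiple $c_i\varphi_i$ to the running flow: each of the at most two edges of $F_i$ forbids at most one of the three choices of $c_i\in\Z_3$, so a good $c_i$ exists, and $\varphi_i$ does not disturb $F_{i+1},\dots,F_n$. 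The existence of such a sequence reaching every vertex is itself where $3$-edge-connectivity is used and also requires proof. Without this (or an equivalent) argument your proposal establishes only the easy half of the theorem; you would additionally need to verify that your preliminary reductions (suppressing degree-$2$ vertices, splitting at $2$-edge-cuts) are compatible with the spanning-subgraph statement and not merely with the flow statement.
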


We give a compact version of Seymour's proof. We prove a slightly stronger statement by induction, using simple contraction-based arguments
(and in particular, we don't require any special reductions). 
Our proof of Theorem \ref{main} relies on contracting a set of edges $S$, finding a flow $f$ in the smaller graph, then uncontracting $S$ and extending the domain of $f$ to include $S$. Observe that 1. it is always possible to extend the domain while maintaining that $f$ is a flow, and 2. if $S$ is a set of at least two parallel edges, and the abelian group has size at least 3, then it is possible to extend the domain so that additionally $0 \not\in f(S)$. In the following we use $G/S$ to denote the graph obtained from $G$ by contracting the set of edges $S \subseteq E$, and $\delta(u)$ is the set of edges incident to vertex $u$.


\begin{theorem}\label{main}
    If $G = (V,E)$ is a $2$-edge-connected digraph, and $u \in V$, then $G$ has a nowhere-zero flow $f_2 \times f_3 : E \to \Z_2 \times \Z_3$ so that $\delta(u) \cap \supp(f_2) = \emptyset$.
\end{theorem}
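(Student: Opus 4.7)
The plan is to prove Theorem~\ref{main} by strong induction on $|E(G)|$. For the base case $|V(G)|=1$, every edge is a loop at $u$, so setting $f_2 \equiv 0$ and $f_3 \equiv 1$ yields a nowhere-zero $\Z_2 \times \Z_3$-flow (loops automatically satisfy the flow condition) with $\delta(u) \cap \supp(f_2) = \emptyset$. For the inductive step with $|V(G)| \ge 2$, the goal is to identify a subset $S \subseteq E(G)$, apply the inductive hypothesis to $G/S$ (still 2-edge-connected, with fewer edges), and then extend the resulting flow $f'$ back to~$G$.

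First I would handle two easy cases to reduce to the situation where $G$ is simple. If $G$ has a loop $\ell$, take $S = \{\ell\}$, apply induction to $G-\ell$, and extend by $f_2(\ell) = 0, f_3(\ell) = 1$. If $G$ has parallel edges $e_1, e_2$ between vertices $x$ and $y$, take $S = \{e_1, e_2\}$ and apply induction to $G/S$, designating the identified vertex $\{x,y\}$ if $u \in \{x,y\}$ and $u$ otherwise. For the extension: when $u \in \{x,y\}$, the flow condition at $u$ forces $f_2(e_1) = f_2(e_2) = 0$ (the remaining $\delta(u)$-edges already have $f_2 = 0$), and Observation~2 supplies $f_3$-values in $\Z_3 \setminus \{0\}$ summing correctly; otherwise the pair $(f_2(e_1), f_3(e_1))$ can be chosen freely from $\Z_2 \times \Z_3$ with $(f_2(e_2), f_3(e_2))$ determined, and at most two of the six choices produce a $(0,0)$-value on some $e_i$.

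The main case is $G$ simple. Here I plan to use 2-edge-connectivity to produce a cycle $C = u, v_1, \ldots, v_{k-1}, u$ through $u$ with edges $e_1 = uv_1, e_2, \ldots, e_k = v_{k-1}u$, and contract $S = \{e_2, \ldots, e_{k-1}\}$, the interior edges of~$C$. This merges $v_1, \ldots, v_{k-1}$ into a single vertex $v'$, so that $e_1$ and $e_k$ become a parallel pair between $u$ and $v'$ in $G/S$; the recursive application of the inductive hypothesis to $G/S$ then falls into the parallel-edges case above. Because $S$ does not meet $\delta(u)$, the constraint $\delta(u) \cap \supp(f_2) = \emptyset$ transfers from $G/S$ to $G$ without difficulty. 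The main obstacle is verifying that the extension of $f'$ to the $k-2$ path edges of $S$ is nowhere-zero: the $k-1$ flow equations at $v_1, \ldots, v_{k-1}$ impose $k-2$ independent constraints on the $k-2$ unknowns, uniquely determining the extension. If an edge of $S$ receives $(0,0)$ under this direct extension, I would modify $f'$ by adding a cycle flow $(a,b)\chi_D$ on a suitable cycle $D$ in $G/S$---ideally one disjoint from $\delta(u)$, so that $(a,b) \in \Z_2 \times \Z_3$ is free---choosing $(a,b)$ to avoid at most $|D|+1$ forbidden values coming from nowhere-zero constraints on $D$ and from the trivial choice $(0,0)$. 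Taking $C$ to be a shortest cycle through $u$ keeps $|S|$ small and makes the existence of such a $D$ and valid $(a,b)$ tractable, completing the induction.
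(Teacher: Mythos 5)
Your base case, your reduction to simple graphs, and your handling of the parallel-edge case (including the split according to whether $u \in \{x,y\}$) are all fine, and your use of the three-element group to place nonzero $f_3$-values on a contracted parallel pair is exactly Observation~2 of the paper. The genuine gap is in the main case. After contracting the interior $S$ of a cycle $C$ through $u$, the extension of $f'$ to the path $S$ is, as you say yourself, \emph{uniquely determined}, so you have no control over it. Your proposed repair --- adding $(a,b)\chi_D$ for a cycle $D$ in $G/S$ and choosing $(a,b)$ to avoid ``at most $|D|+1$ forbidden values'' --- does not close this gap: the group $\Z_2\times\Z_3$ has only $6$ elements; every edge touched by the modification (the edges of $D$ \emph{and} the edges of the affected subpath of $S$ whose forced values shift as a result) can forbid one of them; and if $D$ meets $\delta(u)$ you are further restricted to the $3$ elements with $a=0$. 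Nothing bounds the number of affected edges by $5$, and taking $C$ to be a shortest cycle through $u$ does not help, since the girth of $G$ (and hence the length of every candidate $D$) can be arbitrarily large. You also do not address the possibility that several edges of $S$ receive $(0,0)$ simultaneously, which would require a single modification repairing all of them at once. As written, the inductive step fails for simple graphs.

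The paper's proof shows what extra structure is needed to avoid being stuck with a forced extension. Instead of a single cycle, it takes two edge-disjoint paths $P_1,P_2$ in $G-u$ joining the ends of two edges $ux,ux'$ (after first disposing of the case where $G-u$ has a cut-edge, which guarantees such paths exist), sets $H=P_1\cup P_2$ and $S=E(u,V(H))$, and contracts in two stages. Contracting $E(H)$ first turns $S$ into a class of at least two parallel edges, so $f_3$ can be extended to be nonzero on all of $S$ while $f_2\equiv 0$ there; then, since $H$ is an even subgraph and $f_2$ vanishes on every edge leaving $V(H)$, one may set $f_2\equiv 1$ on all of $E(H)$ and still have a $\Z_2$-flow, which makes every edge of $E(H)$ nonzero regardless of what the arbitrary flow extension of $f_3$ does there. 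That division of labour between the two coordinates --- $f_3$ handles $S$, $f_2$ handles $E(H)$ --- is precisely the mechanism your single-cycle contraction lacks; to salvage your approach you would need to replace the cycle by an even subgraph together with the star of edges joining it to $u$, at which point you have reconstructed Seymour's argument.
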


\begin{proof}
We proceed by induction on $|V|$, with the base case $|V| = 1$ holding trivially.
    First, suppose $G-u$ has a 1-edge-cut $E(V_1,V_2) = \{e\}$. Choose a partition $\{E_1,E_2\}$ of $E\setminus \{e\}$ so that for $i = 1,2$ the edges in $E_i$ have ends in $V_i \cup \{u\}$. Let $G_i = G/E_i$. By induction, $G_i$ has a nowhere-zero flow $f^i_2 \times f^i_3 : E(G_i) \to \Z_2 \times \Z_3$ so the edges incident to the contracted vertex are not in the support of $f_2^i$. By possibly replacing $f^1_3$ with its negation, we may assume that $f^1_3 (e) = f^2_3(e)$ and then the $\Z_2 \times \Z_3$-flows in each $G_i$ combine to give the desired flow in $G$.  Thus we may assume $G-u$ has no cut-edge.

    Choose distinct edges $ux$ and $ux'$ so that $x$ and $x'$ are in the same component of $G-u$ (possibly $x = x'$).  By our assumptions, we may choose two edge-disjoint paths $P_1, P_2 \subseteq G-u$ from $x$ to $x'$.
    Set $H = P \cup P'$, $S = E(u,V(H))$, $G_1 = G/E(H)$ with contracted vertex $u_1$, and $G_2 = G_1/ S$ with contracted vertex $u_2$. By induction, $G_2$ has a flow $f_2 \times f_3 : E(G_2) \to \Z_2 \times \Z_3$ so that $\delta(u_2) \cap \supp(f_2) = \emptyset$. Because $S$ is a set of at least two parallel edges, we may extend $f_3$ to $E(G_1)$ so that it remains a flow and $f_3(e) \neq 0$ for all $e \in S$. Because $\delta(u_2) \cap \supp(f_2) = \emptyset$, setting $f_2(e) = 0$ for all $e \in S$ extends $f_2$ to $E(G_1)$ keeping it a flow. Note that $\delta_{G_1}(u) \cap \supp(f_2) = \emptyset = \delta_{G_1}(u_1) \cap \supp(f_2)$. Now, further extend $f_3$ to $E(G)$ so that it remains a flow. Because $\delta(u_1) \cap \supp(f_2) = \emptyset$, and every vertex of $H$ has even degree, we may extend $f_2$ to $E(G)$ by setting $f_2(e) = 1$ for all $e \in E(H)$, keeping it a flow. Now $S \subseteq \supp(f_3), E(H) \subseteq \supp(f_2)$ and so $f_2 \times f_3$ is as desired.
\end{proof}

 \bibliographystyle{abbrv}
\bibliography{bib}

\end{document}